\documentclass{amsart}
\usepackage[utf8]{inputenc}
\usepackage[margin=1.4 in]{geometry}
\usepackage{amssymb,bm,amsfonts, stmaryrd, amsmath, amsthm, enumerate, mathtools,comment,color,xcolor}
\usepackage[all]{xy}
\makeatletter
\@namedef{subjclassname@2020}{\textup{2020} Mathematics Subject Classification}
\makeatother

\definecolor{chianti}{rgb}{0.6,0,0}
\definecolor{meretale}{rgb}{0,0,.6}
\definecolor{leaf}{rgb}{0,.35,0}
\usepackage[colorlinks=true,linkcolor=meretale,
urlcolor=chianti, citecolor=leaf,pagebackref]{hyperref}
\usepackage{wrapfig,tikz, tikz-cd}
\usetikzlibrary{arrows}
\usepackage[capitalise]{cleveref}
\crefformat{equation}{(#2#1#3)}
\crefrangeformat{equation}{(#3#1#4--#5#2#6)}
\crefformat{enumi}{(#2#1#3)}
\crefrangeformat{enumi}{(#3#1#4--#5#2#6)}
\newtheorem{theorem}{Theorem}[section]
\newtheorem{lemma}[theorem]{Lemma}
\newtheorem{proposition}[theorem]{Proposition}
\newtheorem{corollary}[theorem]{Corollary}
\newcounter{intro}

\newtheorem{introthm}[intro]{Theorem}
\newtheorem{introcor}[intro]{Corollary}

\theoremstyle{definition}

\newtheorem{example}[theorem]{Example}

\newtheorem{setup}[theorem]{Setup}

\newtheorem{chunk}[theorem]{}

\DeclareRobustCommand\longtwoheadrightarrow
     {\relbar\joinrel\twoheadrightarrow}

\theoremstyle{definition}

\newcommand{\pdim}{{\operatorname{pdim}}}

\newcommand{\Spec}{{\operatorname{Spec}}}

\newcommand{\Tor}{{\operatorname{Tor}}}

\renewcommand{\H}{\operatorname{H}}

\DeclareMathOperator{\thick}{\mathsf{thick}}

\DeclareMathOperator{\codepth}{codepth}

\DeclareMathOperator{\depth}{depth}

\newcommand{\V}{{\rm{V}}}

\newcommand{\lotimes}{\otimes^{\sf L}}
\DeclareMathOperator{\D}{\mathsf{D}}

\DeclareMathOperator{\reg}{reg}
\DeclareMathOperator{\spread}{spread}

\newcommand{\m}{\mathfrak{m}}
\newcommand{\p}{\mathfrak{p}}

\newcommand{\cF}{\mathcal{F}}
\newcommand{\cO}{\mathcal{O}}
\newcommand{\cK}{\mathcal{K}}

\DeclareMathOperator{\edim}{edim}

\DeclareMathOperator{\gr}{gr}
\DeclareMathOperator{\Kos}{Kos}

\newcommand{\q}{\mathfrak q}

\newcommand{\ma}{\mathfrak a}
\renewcommand{\k}{K}

\title{Regularity is bounded on a quasi-excellent Noetherian scheme}

\author[De Stefani]{Alessandro De Stefani}
\address{Dipartimento di Matematica, Universit{\`a} di Genova, Via Dodecaneso 35, 16146 Genova, Italy}
\email{alessandro.destefani@unige.it}

\author[Jeffries]{Jack Jeffries}
\address{Department of Mathematics, University of Nebraska, Lincoln, NE 68588-0130, USA}
\email{jack.jeffries@unl.edu}

\author[KC]{Nawaj KC}
\address{Department of Mathematics, University of Utah, Salt Lake City, UT 84112, USA}
\email{nawaj.kc@utah.edu}

\author[N\'u\~nez-Betancourt]{Luis N\'u\~nez-Betancourt}
\address{Centro de Investigaci\'on en Matem\'aticas, Guanajuato, Gto., M\'exico}
\email{luisnub@cimat.mx}

\keywords{Betti numbers, local-to-global principle, derived category, Frobenius pushforward, generators of derived category. }
\subjclass[2020]{Primary:  	13D02,  	13A30. Secondary: 13D09, 13A35,  	14A15}

\begin{document}
\begin{abstract} 
    A point of a scheme has an associated tangent cone, the spectrum of a standard graded algebra encoding the local singularity. Its homological complexity can be measured by its graded Betti table: a matrix that records a part of the structure of its graded, minimal free resolution over a polynomial ring. A natural question is whether the homological complexity of the tangent cones varies arbitrarily across a scheme. In this paper, we show that this is not the case for a quasi-excellent Noetherian scheme; over such schemes, only finitely many graded Betti tables can occur. More generally, we show that a coherent sheaf over a quasi-excellent Noetherian scheme admits finitely many graded Betti tables, and that the constancy loci for the graded Betti table are constructible. As an immediate consequence, regularity is bounded on a quasi-excellent Noetherian scheme. 
\end{abstract}
\maketitle

    Given a scheme $X$, let $\mathcal{O}_{X,x}$ denote the local ring of $X$ at $x$ with the residue field $k(x)$. If $X$ is locally Noetherian, then the associated graded ring of $\mathcal{O}_{X,x}$ at its maximal ideal, denoted $\gr \mathcal{O}_{X,x}$, is a standard graded $k(x)$-algebra, and admits a minimal graded presentation as a quotient of a standard graded polynomial ring $S_x$ over $k(x)$. For a coherent sheaf $\mathcal{F}$ on $X$, $\gr \cF_x$ is a graded module over $S_x$. The 
    \emph{graded Betti table} of $\cF$ at $x$, denoted $\beta_{\cF_x}$, is the matrix of graded Betti numbers of $\gr \cF_x$ as an $S_x$-module:
    \[ (\beta_{\cF_x})_{i,j} = \dim_{k(x)}[\Tor_i^{S_x}(k(x), \gr \cF_x)]_j\,.\]
 
   This is an invariant of the pair $(\cF, \cO_X)$ at $x \in X$. 
   The main result of this note is that, for a coherent sheaf over any quasi-excellent Noetherian scheme, only finitely many graded Betti tables occur.

    \begin{introthm}\label{main-ertheorem}
        Let $\cF$ be a coherent sheaf on a quasi-excellent Noetherian scheme $X$.  Then $\{ \beta_{\cF_x} \, | \, x\in X\}$ is finite. Moreover, the loci on which $\beta_{\cF_x}$ is constant are constructible in~$X$.
    \end{introthm}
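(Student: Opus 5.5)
Our strategy is Noetherian induction on closed subsets of $X$. It suffices to prove that every nonempty closed integral subscheme $Z\subseteq X$ has a nonempty open subset $U\subseteq Z$ on which $x\mapsto\beta_{\cF_x}$ is constant. Indeed, we can then stratify $X$ into finitely many locally closed pieces on which the Betti table is constant. This gives finiteness, and every constancy locus is then a finite union of strata, hence constructible. The problem is local on $X$, so we may assume $X=\Spec R$ with $R$ quasi-excellent, $\cF=\widetilde M$ for a finitely generated $R$-module $M$, and $Z=V(\p)$ for a prime $\p$.

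The key step is to control $\gr_{\q}$ uniformly for primes $\q\in V(\p)$ near $\p$, by comparing it with the normal cone of $\p$. Consider the associated graded ring $G=\gr_{\p}R=\bigoplus \p^n/\p^{n+1}$ and module $\gr_\p M$. These are finitely generated over $R/\p$, a domain. By generic freeness, after inverting some $f\notin\p$, all $\p^n/\p^{n+1}$ and $\p^nM/\p^{n+1}M$ are free over $R/\p$. Next we use quasi-excellence: the regular locus of $\Spec R/\p$ is open, so after shrinking, $R_\q/\p R_\q$ is regular for all $\q\in V(\p)\cap D(f)$. I expect to need an even stronger equinormality-type statement: for $\q$ in a dense open of $V(\p)$, $\gr_{\q}R_\q$ is isomorphic to $(\gr_{\p R_\p}R_\p\otimes_{k(\p)}k(\q))[y_1,\dots,y_d]$ up to the field extension, with $d=\dim R_\q/\p R_\q$, and similarly for $M$. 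The intended argument is as follows. Choose elements of $R$ whose images form a regular system of parameters of $R_\q/\p R_\q$; these form a regular sequence on the free $R/\p$-modules $\gr_\p$. Then a standard comparison of filtrations gives $\gr_\q \cong \gr_\p\otimes_{R/\p}\gr_{\q}(R/\p)$, where $\gr_\q(R/\p)$ is a polynomial ring over $k(\q)$. This is the normal flatness phenomenon (Hironaka/Bennett): on the locus where $\gr_\p$ is flat over $R/\p$ and $R/\p$ is regular, this tensor decomposition holds. I would cite or reprove Bennett's theorem that normal flatness plus regularity of the center gives $\gr_{\q}R_\q\cong (\gr_{\p}R)_\q\otimes_{R_\q/\p R_\q}\gr(R_\q/\p R_\q)$.

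Given this decomposition, we compute the Betti table. The ring $\gr \cO_{X,\q}$ is $A\otimes_{k(\p)}$-like data base-changed to $k(\q)$, with polynomial variables adjoined. The module $\gr M_\q$ is the corresponding base change. Adjoining polynomial variables to both the ring $S$ and the module does not change graded Betti numbers, since this is a flat extension with the variables in the minimal presentation. Field extension does not change them either. The remaining issue is that the fibers $(\gr_\p R)\otimes k(\q)$ vary with $\q$. They are fibers of a flat family over the domain $R/\p$ of finitely generated graded algebras and modules. Their Betti tables over the fibers of a polynomial ring are therefore generically constant, by generic freeness applied to the finitely many Tor modules of a resolution over $(R/\p)[x_1,\dots,x_n]$. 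Only finitely many degrees matter, by the Hilbert syzygy theorem and finite generation. Shrinking $U$ accordingly finishes the induction step.

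The main obstacle is the normal flatness step: establishing the isomorphism $\gr_\q\cong\gr_\p\otimes\gr_\q(R/\p)$ on a dense open. This is where quasi-excellence enters, through openness of the regular locus of $R/\p$. One needs care that the minimal polynomial presentation at $\q$ has embedding dimension $\edim(\gr_\p\otimes k(\q))+d$, so that Betti numbers over $S_\q$ match rather than differing by Koszul factors.
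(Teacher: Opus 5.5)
Your proposal is correct and follows the paper's route: Noetherian induction, generic freeness of $\gr_\p R$ and $\gr_\p M$ together with openness of the regular locus of $R/\p$, and Hironaka's normal-flatness decomposition $\gr_{\q R_\q}(M_\q)\cong \bigl(\kappa(\q)\otimes_{R/\p}\gr_\p M\bigr)\otimes_{\kappa(\q)}\gr(R_\q/\p R_\q)$. The paper proves the module version of that decomposition itself, by induction on the length of a regular sequence generating $\q$ modulo $\p$, and then uses generic constancy of the Betti tables of the fibers via generic freeness, as you do. The differences are minor. For the fiber Betti numbers, the paper uses $L\otimes_S\Kos^S(x_1,\dots,x_e)$ instead of a resolution of $L$, so it needs neither $A$-flatness of $L$ nor the syzygy theorem. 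Also, you should discard your early formula $\gr_{\p R_\p}R_\p\otimes_{k(\p)}k(\q)$: there is no map $k(\p)\to k(\q)$, and the correct object is the special fiber $k(\q)\otimes_{R/\p}\gr_\p R$, which you use later anyway.
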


  We note that we make no assumptions about the finiteness of the Krull dimension of $X$. In fact, the main result holds under the assumptions that $X$ is Noetherian and satisfies the J$_2$-property (i.e., for any scheme of finite type over $X$, the singular locus is closed). 
  
  In the theorem above, the main case of interest is when $\cF$ is the structure sheaf $\cO_X$ on $X$. As its immediate corollary, we may deduce that any invariant of the local ring at $x$ that is determined or bounded by the Betti table of the local ring is also globally bounded as we range over all $x \in X$. Below, $\reg(-)$, $\spread(-)$, and $\codepth(-)$ denote the Castelnuovo-Mumford regularity, spread, and codepth of a local Noetherian ring; see \textsection\ref{prelim}.

    \begin{introcor} \label{maintheorem}
        If $X$ is a quasi-excellent Noetherian scheme, then the sets \[\{ \reg(\mathcal{O}_{X, x}) \, | \, x \in X\}, \quad
\{\spread(\mathcal{O}_{X, x})\, | \, x \in X \}, \quad \text{and} \quad \{\codepth(\mathcal{O}_{X, x})\, | \, x \in X \}\]
are finite.
    \end{introcor}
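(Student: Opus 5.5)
The corollary is an immediate consequence of the theorem stated just before it (the theorem on finitely many Betti tables), applied to $\cF = \cO_X$. By that theorem, the set $\{\beta_{\cO_{X,x}} \mid x \in X\}$ is finite. So it suffices to show that each of $\reg(\cO_{X,x})$, $\spread(\cO_{X,x})$ and $\codepth(\cO_{X,x})$ is determined by the graded Betti table $\beta_{\cO_{X,x}}$. Once this holds, each of the three sets is the image of a finite set under a function, and hence is finite.

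For the regularity, I use the definition recalled in the preliminaries: $\reg(\cO_{X,x})$ is the Castelnuovo--Mumford regularity of $\gr \cO_{X,x}$ as a module over $S_x$. This equals $\max\{ j - i : (\beta_{\cO_{X,x}})_{i,j} \neq 0\}$, which is read off the table. The maximum is finite because $S_x$ is a polynomial ring in finitely many variables and $\gr\cO_{X,x}$ is finitely generated. Hence the minimal graded free resolution is finite, and the table has only finitely many nonzero entries.

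For the codepth, $\codepth(\cO_{X,x}) = \edim \cO_{X,x} - \depth \cO_{X,x}$. The embedding dimension is $\dim S_x$, since $S_x$ is the minimal presentation: its variables correspond to a basis of $\m/\m^2$. The depth must be related to the table as follows.
\begin{itemize}
\item I will use the standard facts $\depth \cO_{X,x} \ge \depth \gr \cO_{X,x}$, with equality not automatic, and $\dim$ being preserved.
\item If the preliminaries define codepth via the associated graded ring, then Auslander--Buchsbaum over $S_x$ gives $\codepth = \pdim_{S_x} \gr\cO_{X,x}$, the index of the last nonzero row of the table.
\item If instead codepth is defined for the local ring itself, I will bound it rather than compute it. I will use $0 \le \codepth(\cO_{X,x}) \le \edim \cO_{X,x} - \depth \gr \cO_{X,x} = \pdim_{S_x}\gr\cO_{X,x}$. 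The embedding dimension is the number of variables, which is recovered from the table only through the ambient ring. I will handle this by noting that the table is indexed over $S_x$, so its length, or equivalently $\dim S_x$, is part of the data. If needed, I will bound it by the total Betti number in homological degree $\dim S_x$ being related to the Koszul data.
\end{itemize}
A bounded set of integers is finite, so either route suffices.

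For the spread, the spread defined in the preliminaries is a difference of invariants such as $\edim - \dim$ adjusted by the degree-$2$ part, or a quantity computed from $\gr$. Each of these is computable from the Hilbert series of $\gr\cO_{X,x}$ and $\dim S_x$. The Hilbert series of $\gr \cO_{X,x}$ is determined by the Betti table through the formula $H(t) = \sum_{i,j} (-1)^i \beta_{i,j} t^j / (1-t)^{\dim S_x}$. From it one reads off the Hilbert function, the multiplicity and the dimension. So the spread is a function of the table.

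The only step needing care is codepth, where depth of the local ring versus its associated graded ring could differ. I expect the bounding argument above to settle it even if exact determination fails.
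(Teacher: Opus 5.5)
The gap is in the spread. Spread is not ``$\edim-\dim$ adjusted by the degree-$2$ part.'' It is also not a function of the Hilbert series of $G=\gr\cO_{X,x}$ together with $\dim S_x$. By the formula recalled in the preliminaries, $\spread(A)=\sup\{j\mid \H^i_{G_+}(G)_j\neq 0\}+\mu(G_+)$, which involves local cohomology. Concretely, take $S=k[x,y,z]$ and $\m=(x,y,z)$. The rings $S/(x^2,y^2)$ and $S/L$ with $L=(x^2,xy,xz^2,y^4)$ have the same Hilbert series $(1+t)^2/(1-t)$ and the same number of variables. The first has spread $1+3=4$, since its only nonzero local cohomology is $\H^1$, ending in degree $1$. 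For the second, $0\neq xz\in \H^0_{\m}(S/L)_2$ forces spread at least $5$. Both are associated graded rings of their localizations, so this is a genuine counterexample. In addition, your expression for $H(t)$ uses $\dim S_x$, which the table does not record: every regular local ring has the one-entry table $\beta_{0,0}=1$. So your route would not give a function of the table even if the first claim held.

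The repair uses the paper's local cohomology description of regularity. If $\H^i_{G_+}(G)_j\neq 0$, then $i+j\le \reg(A)$ and $i\ge \depth G$, so $j\le\reg(A)-\depth G$. Hence
\[
\spread(A)\le \reg(A)-\depth G+\mu(G_+)=\reg(A)+\pdim_{S_x}G ,
\]
and both terms on the right are read off $\beta_{\cO_{X,x}}$. Since spread is a nonnegative integer, finitely many tables give finitely many spreads. (Via graded local duality over $S_x$ one can check that the formula in fact equals the largest $j$ with some $\beta_{i,j}(G)\neq 0$. This is why the paper can say this case ``follows from the definition.'')

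Your regularity argument is the same as the paper's. Your codepth bound $\codepth(\cO_{X,x})\le \edim\cO_{X,x}-\depth G=\pdim_{S_x}G$ is correct; it comes from $\depth G\le \depth \cO_{X,x}$ and graded Auslander--Buchsbaum. It is the same inequality the paper uses, phrased there after completion as $\pdim_{Q(x)}\widehat{\cO_{X,x}}\le \pdim_{\gr Q(x)}\gr \widehat{\cO_{X,x}}$ (Fr\"oberg); your version avoids the completion, which is fine. However, delete the surrounding discussion of recovering $\edim$ from the table. The length of the table is $\pdim_{S_x}G$, not $\dim S_x$, and $\edim$ is not determined by the table. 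Fortunately the bound does not need it.
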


Using a construction of Hochster \cite[Proposition 1]{Hochster}, one can produce finite-dimensional Noetherian affine schemes for which each of the functions above is unbounded; such rings fail the J$_2$-property. We also note that, even if $X$ is of finite-type over a field, $\reg(\mathcal{O}_{X, x})$ and $\spread(\mathcal{O}_{X, x})$ can fail to be semicontinuous; see \cref{example} below.

A motivation and consequence of this result is a simpler proof to a recent result regarding the generation of $\D^b(R)$  for F-finite rings \cite{BILMP}; see  \cref{generationsection}. The proof we give was known to the authors of \emph{ibid.}; however, the global bound on spread was the missing ingredient.

\begin{introcor}[{\cite{BILMP}}]
    If $R$ is an F-finite Noetherian ring of prime characteristic $p > 0$ and $F$ denotes its Frobenius endomorphism, then there exists an $e \geq 1$ such that $F^e_*R$ is a generator of $\D^b(R)$. 
    \end{introcor}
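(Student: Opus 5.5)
Since $R$ is F-finite, it is excellent by Kunz's theorem (and of finite Krull dimension), so $\Spec R$ is a quasi-excellent Noetherian scheme. By \cref{maintheorem} there is therefore an integer $N$ with $\spread(R_\p)\le N$ for all $\p\in\Spec R$; if needed we also use the bounds on $\reg$ and $\codepth$. We fix $e$ with $p^e$ large compared with $N$; the exact threshold is whatever the local lemma below requires. The plan is to show that this single $e$ works at every prime, and then glue the local statements together.

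\emph{Local-to-global reduction.} We invoke the local-to-global principle for thick subcategories of $\D^b(R)$. It says that an object $G\in\D^b(R)$ generates $\D^b(R)$ as soon as $k(\p)\in\thick_{\D(R_\p)}(G_\p)$ for every $\p\in\Spec R$. This is the form used in \cite{BILMP}, following Iyengar--Takahashi and Letz. Localization commutes with Frobenius pushforward, so $(F^e_*R)_\p\cong F^e_*R_\p$. It therefore suffices to prove the following for a local F-finite ring $(A,\m,k)$ with $\spread(A)\le N$: $k\in\thick(F^e_*A)$.

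\emph{Local step.} Let $K=\Kos(\underline{x};A)$ be the Koszul complex on a minimal generating set $\underline{x}$ of $\m$. Since $K$ is built from $A$ in finitely many steps, $K\otimes_A F^e_*A$ lies in $\thick(F^e_*A)$. On the other hand,
\[K\otimes_A F^e_*A\cong F^e_*\Kos(\underline{x}^{p^e};A)\]
as complexes of $A$-modules. The key claim is that once $p^e$ exceeds a bound depending only on the spread (and regularity) of $\gr A$, the complex $K\otimes_A F^e_*A$ has $k$ as a direct summand in $\D(A)$. This is the statement we expect to be the main obstacle. To prove it, we would pass to the associated graded ring and use that $\spread(A)\le N$ controls where the Koszul homology of $\gr A$ is concentrated. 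From this we would show that, for $\underline{x}^{p^e}$ of degree greater than that spread, the differentials of $\Kos(\underline{x}^{p^e};A)$ after $-\otimes_A k$, together with the Frobenius-twisted $A$-structure, split off a copy of $k$. Concretely, we would exhibit maps $k\to F^e_*\Kos(\underline{x}^{p^e};A)\to k$ composing to the identity: the first comes from a socle-type class in top degree, and the second from projection in bottom degree. Filtration arguments on $\gr A$ should make these compatible with the differentials once $p^e\gg N$.

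Granting the local step, $k(\p)\in\thick(F^e_*R_\p)$ holds for all $\p$ with the same $e$, because the bound $N$ is uniform. The local-to-global principle then gives that $F^e_*R$ generates $\D^b(R)$. The only place global input is needed is the uniformity of $e$. Pointwise, each prime has its own $e_\p$, and without \cref{maintheorem} these could be unbounded; this is precisely the missing ingredient mentioned above.
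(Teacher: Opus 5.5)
\paragraph{Verdict.} Your global reduction matches the paper's: Kunz's theorem, the uniform spread bound from Corollary~\ref{maintheorem}, the local-to-global criterion, and $(F^e_*R)_\p\cong F^e_*R_\p$. The gap is the local step. You leave it as an expectation, and the route you sketch for it does not work.

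\paragraph{Why your local step fails.} The proposed splitting $k\to F^e_*\Kos(\underline{x}^{p^e};A)\to k$ cannot be a class in top homological degree $n=\edim A$ followed by projection in degree $0$. Such a composite lies in $\operatorname{Hom}_{\D(A)}(k[n],k)=\operatorname{Ext}^{-n}_A(k,k)=0$ for $n\ge 1$. Moreover, $\H_n(\underline{x}^{p^e};A)=(0:_A\m^{[p^e]})$ vanishes as soon as $\depth A>0$.

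More seriously, you are working with the wrong object. $\cK\otimes_A F^e_*A\cong F^e_*\Kos(\underline{x}^{p^e};A)$ is the pushforward of the Frobenius \emph{pullback} of $\cK$, and $A$ acts on it through the $\cK$ factor. The spread tells you only that it is quasi-isomorphic to $(\cK/J^s)\lotimes_A F^e_*A$, a complex killed by $\m^s$; the Frobenius gives no further improvement. For instance, if $A$ is singular and F-split, then $\cK$ is a direct summand of $\cK\otimes_A F^e_*A$ for every $e$. So this complex is never quasi-isomorphic to a complex of $k$-vector spaces, since otherwise $k$ would be perfect. Whether $k$ nevertheless splits off depends on finer information about the non-free summands of $F^e_*A$, and nothing in your sketch, including the vague ``filtration arguments on $\gr A$'', supplies it.

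\paragraph{The missing idea.} Apply $F^e_*$ (restriction of scalars) to the Koszul complex $\cK$ itself, and use the defining property of spread.
\begin{itemize}
\item For $s\ge\spread(A)$ there is a quasi-isomorphism $\cK\simeq\cK/J^s$, and $\m^s$ annihilates $\cK/J^s$.
\item If $p^e\ge s$, each $a\in\m$ acts on $F^e_*(\cK/J^s)$ as $a^{p^e}\in\m^{p^e}\subseteq\m^s$, that is, by zero. Hence $A$ acts through $k$.
\item So $F^e_*(\cK/J^s)$ is a complex of $k$-vector spaces with nonzero homology, and is therefore a finite direct sum of shifts of $k$ in $\D(A)$.
\item Consequently $k\in\thick(F^e_*(\cK/J^s))=\thick(F^e_*\cK)\subseteq\thick(F^e_*A)$. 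The last inclusion holds because $F^e_*$ is exact and $\cK\in\thick(A)$.
\end{itemize}
Taking $p^e$ larger than the global spread bound makes this work at every prime with a single $e$, and your local-to-global step then finishes the proof.
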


A key input implicit in our argument for Theorem~\ref{main-ertheorem} is Hironaka's theory of normal flatness \cite[Chapter~II]{Hironaka}.

\section{Preliminaries.} \label{prelim}

\subsection{Betti tables and associated graded rings}

Given a commutative ring $R$, an ideal $I \subseteq R$, and an $R$-module $M$, we let $\gr_{I}(M)$ denote the associated graded module of $M$ at the ideal $I$. That is,
 \[ \gr_{I}(M) = \bigoplus _{i \geq 0} I^iM/I^{i+1}M.\]
In the case $M=R$, one has that $\gr_{I}(R)$ is a graded $R/I$-algebra, generated in degree one, and $\gr_{I}(M)$ is a graded $\gr_{I}(R)$-module.

When $(A, \m)$ is a local ring, we may simply write $\gr(A)$ and $\gr(M)$, omitting the maximal ideal $\m$ from the notation. For a standard graded algebra $R$ over a ring $A$ with $[R]_1$ a free \mbox{$A$-module}, by a \emph{minimal graded presentation} for $R$ over $A$ we mean a standard graded polynomial ring $S=A[x_1,\dots,x_n]$ over~$A$ with a surjective graded $A$-algebra homomorphism $S\twoheadrightarrow  R$ that is an isomorphism in degrees $0$ and $1$.

For a finitely generated graded module $M$ over a standard graded algebra $R$ over a field $k$ we define the \emph{graded Betti table} of $M$ to be the matrix $\beta^R(M)$ with entries \[ \beta^R(M)_{i,j} = \dim_k [\Tor_i^S(M,k)]_j\] where $S\twoheadrightarrow R$ is a minimal graded presentation. Equivalently, $\beta^R(M)_{i,j}$ is the minimal number of generators in degree $j$ and homological degree $i$ in a minimal free resolution for $M$ over $S$.

For a finitely generated $A$-module $N$ over a Noetherian local ring $A$, we define its \emph{graded Betti table} to be $\beta^A(N):= \beta^{\gr(A)}(\gr(N))$. We simply write $\beta(N)$ when $A$ is clear from context. In particular, for a coherent sheaf $\cF$ on a locally Noetherian scheme $X$ and a point $x\in X$, we have $\beta(\cF_x) = \beta(\gr(\cF_x))$ as in the introduction.

\subsection{Regularity.} \label{reg}
    Let $(A, \m, k)$ be a Noetherian local ring and let $G = \gr(A)$ denote its associated graded ring. The regularity of $A$ is defined as \[ \reg(A) = \sup\{ i+j\, | \, \H^i_{G_+}(G)_j\not= 0 \text{ for some }i\}, \]
    where $G_+ = \bigoplus_{i>0} \m^i/\m^{i+1}$ denotes the unique homogeneous maximal ideal of $G$. Alternatively, the regularity $\reg(A)$ is a measure of the size of the graded Betti table of $A$. More precisely,
     \[ \reg(A) = \sup\{j \, | \, \beta_{i,i+j}(A) \not= 0 \text{ for some } i\},
     \]
     Note that, in particular, the latter description of $\reg(A)$ in terms of Betti numbers of $G$ does not depend on the choice of a minimal presentation of $G$.

\subsection{Spread.} \label{spread}

A related invariant we attach to a local ring is its \textit{spread}. This invariant was introduced by Avramov--Iyengar--Miller \cite[3.8]{AIM}. Given a local ring $(A, \m, k)$, let $\cK$ denote the Koszul complex on the minimal generators of $\m$ which, up to isomorphism, is independent of the choice of a minimal generating set; see \cite[Lemma 3.1]{AIM}. We have a short exact sequence  \[0 \longrightarrow J \longrightarrow \cK \longtwoheadrightarrow \H_0(\cK) = k \longrightarrow 0,\] 
where $\cK \twoheadrightarrow  k$ is a surjection of dg-algebras, and the kernel $J$ is a dg ideal of $\cK$. The spread of $A$ is defined as \[ \spread(A) = \inf\{n \, | \, \H(J^i) = 0 \text{ for all } i \geq n\}.  \] That is, we have a quasi-isomorphism of $A$-complexes $\cK/J^s \simeq \cK$ if $s \geq \spread(A)$; this is the key to the proof of Theorem~\ref{theoremDbR}.

The spread of $A$ is, in fact, an invariant of its associated graded ring $G = \gr (A)$. From \cite[Proposition 3.11 and Remark 3.12]{AIM}, it follows that the spread of $A$ is equal to
\[ \spread(A) = \sup\{j \, | \, \H^i_{G_+}(G)_j\not= 0 \}  + \mu(G_+),\] 
where $\mu(-)$ denotes minimal number of generators.

It was asked whether there is a global bound on the spread of $R_{\p}$ as $\p$ varies over all prime ideals of a fixed Noetherian ring $R$ \cite[3.2]{BILMP}. We establish that such a bound exists for any quasi-excellent Noetherian ring.  In principle, such a bound would follow from upper-semicontinuity of this invariant; the next example, however, shows that both regularity and spread fail to be upper-semicontinuous, in general.

\begin{example} \label{example}
       We claim that there exists a local ring $(A,\m)$, essentially of finite type over a field, and a prime ideal $\p \subsetneq \m$, such that $\reg(A_\p) > \reg(A)$ and $\spread(A_{\p}) > \spread(A)$; in particular, $x\mapsto \reg(\mathcal{O}_{X,x})$ and $x\mapsto \spread(\mathcal{O}_{X,x})$ are not upper-semicontinuous on $X = \Spec\,  A$.  
        Let $k$ be any field and consider the local ring \[ A = \frac{k[x, y,z, w]_{(x, y, z, w)}}{(x^3w^2-y^5, xy^3w - z^5)} \] with the maximal ideal $\m = (x, y, z, w)$. Since $\gr(A)$ is a complete intersection of codimension two cut out by equations of degree five, we have $\reg(A) = 5 + 5 -2 =8$ and $\spread(A) = 10.$

        We now consider $\p = (x, y, z) \in \Spec\, A$. Localizing $A$ at $\p$, we have \[ A_{\p} \cong \frac{k(w)[x, y, z]_{(x,y,z)}} {(w^2x^3-y^5, wxy^3-z^5)}.\]
        A standard Gr{\"o}bner-bases calculation shows that
        \[
        \gr_{\p A_\p}(A_\p) \cong \frac{k(w)[x,y,z]}{(x^3,xy^3,x^2z^5,y^{11}-xz^{10})},
        \]
        and thus $\reg(A_\p) \geq 10> 8=\reg(A)$. Futhermore, since $0 \ne x^2y^2z^4 \in \H^0_{(x, y, z)}(\gr_{\p A_{\p}}(A_{\p}))$, we have that $\spread(A_{\p}) \geq 11 > 10 = \spread(A).$  
    \end{example}

\subsection{Codepth.} The codepth of a local ring $(A, \m)$ is defined as \[ \codepth A = \edim A - \depth A,\]where $\edim A$ is equal to the minimal number of generators of the maximal ideal of $A$. For a locally Noetherian scheme $X$, one defines \[ \codepth(X) = \sup \{ \codepth(\mathcal{O}_{X,x})\, | \, x \in X \}.\]This invariant plays a central role in the proof that $F^e_* R$ generates $\D^b(R)$ \cite{BILMP} and it is shown that the codepth is finite for a Noetherian F-finite scheme \cite[Proposition 3.4]{BILMP}. We show that codepth is finite for all Noetherian quasi-excellent schemes.

\section{Main results}

 The key input in this paper is the following theorem. As corollaries, one obtains proofs for  \Cref{main-ertheorem} and \Cref{maintheorem}.

 \begin{theorem} \label{mainlemma}
     Suppose that $R$ is a Noetherian ring and $\p \in \Spec\, R$ is such that the regular locus of $R/\p$ is an open set. If $M$ is a finitely generated $R$-module, then there exists $f \in R \smallsetminus \p$ such that for all $\q \in \V(\p) \smallsetminus \V(f)$ the graded Betti tables $\beta(M_\q)$ are constant. 
      \end{theorem}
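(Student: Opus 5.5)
The plan is to reduce, after inverting a suitable $f\notin\p$, to a situation where $\gr_\p(R)$ and $\gr_\p(M)$ are flat over $R/\p$. Then Hironaka's normal flatness theory computes $\gr(M_\q)$, for every $\q\in\V(\p)$ outside $\V(f)$, as a base change of $\gr_\p(M)$ with polynomial variables adjoined. Throughout, write $A=R/\p$, a Noetherian domain, and note that localizing $R$ at elements $f\notin\p$ only shrinks $\V(\p)\smallsetminus\V(f)$. We may therefore repeatedly replace $R$ by $R_f$, since finitely many such shrinkings still yield a single $f$.

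\emph{Step 1 (regularity and generic freeness).} Since the regular locus of $A$ is open and contains the generic point, we first choose $f$ so that $A_f$ is regular. The ring $\gr_\p(R)$ is a finitely generated graded $A$-algebra, standard graded, and $\gr_\p(M)$ is a finitely generated graded module over it. By generic freeness (in its graded form, Hochster--Roberts), after inverting a further $f$ we may assume that every graded piece $\p^iR/\p^{i+1}R$ and $\p^iM/\p^{i+1}M$ is a free $A$-module. In particular $\p/\p^2$ is free of some rank $n$. This gives a minimal graded presentation $S=A[x_1,\dots,x_n]\twoheadrightarrow\gr_\p(R)$.

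Next take a graded free resolution $F_\bullet$ of $\gr_\p(M)$ over $S$ by finite free modules. Apply generic freeness again to the finitely many graded pieces of $\Tor_i^S(\gr_\p(M),A)$ that can be nonzero. Since $\operatorname{Frac}(A)$ is a field, a Hilbert-syzygy argument over the generic fiber lets us truncate: only finitely many $i$ and $j$ matter after inverting an element. After this shrinking, all of these graded pieces are free over $A$. Consequently, for every $\q\in\V(\p)$ the fiber $\gr_\p(M)\otimes_A k(\q)$ has, over $S\otimes_A k(\q)$, a graded Betti table independent of $\q$. Here we use that flatness of the cokernels lets the Tor commute with the base change $A\to k(\q)$.

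\emph{Step 2 (normal flatness; the main obstacle).} Fix $\q\supseteq\p$, and let $d=\dim A_\q$. Since $A_\q$ is regular, choose $t_1,\dots,t_d\in R_\q$ whose images form a regular system of parameters of $A_\q$. The claim to establish is a graded isomorphism
\[\gr_{\q}(M_\q)\;\cong\;\bigl(\gr_\p(M)\otimes_A k(\q)\bigr)[T_1,\dots,T_d],\]
compatible with the analogous isomorphism for $R$, which has $\gr(R_\q)\cong(\gr_\p(R)\otimes_A k(\q))[T_1,\dots,T_d]$. This is precisely Hironaka's (and Bennett's) theorem: normal flatness of $M$ along a regular subscheme $\V(\p)$ at $\q$ gives the $\q$-adic associated graded as such a polynomial extension.

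I expect this to be the hardest step. My first attempt would be a direct filtration argument. Flatness of each $\p^iM/\p^{i+1}M$ over the regular local ring $A_\q$ makes $t_1,\dots,t_d$ a regular sequence on each piece. Comparing the $\q$-adic filtration with the bifiltration by powers of $\p$ and of $(t)$ then shows that the natural surjection from the right-hand side onto $\gr_\q(M_\q)$ is injective. The injectivity can be checked via Hilbert functions, or via the fact that $\gr_{(t)}$ of a flat $A_\q$-module is a polynomial extension.

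\emph{Step 3 (conclusion).} The isomorphism in degree one shows that $S\otimes_A k(\q)[T_1,\dots,T_d]$ is a minimal graded presentation of $\gr(R_\q)$. Adjoining polynomial variables is a flat extension that does not change graded Betti numbers. Hence $\beta(M_\q)$ equals the Betti table of $\gr_\p(M)\otimes_A k(\q)$ over $S\otimes_A k(\q)$, which is constant by Step 1.
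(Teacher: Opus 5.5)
Your proposal is correct and is essentially the paper's proof, with the steps in a different order. Both arguments run as follows. First, generic regularity and generic freeness make $\gr_\p(R)$ and $\gr_\p(M)$ free over $(R/\p)_f$. Next, the normal-flatness isomorphism $\gr_{\q R_\q}(M_\q)\cong(\kappa(\q)\otimes\gr_\p(M)_f)[T_1,\dots,T_d]$ holds (Proposition~\ref{PropHironaka} and Corollary~\ref{cor-family}), and adjoining variables does not change Betti tables. Finally, generic freeness applied to the Koszul/Tor homology makes the fiber Betti tables constant (Lemma~\ref{lem-family}).

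The one thin spot is your own sketch of Step~2. A Hilbert-function count only finishes the argument once you know $\p^iM_\q\cap\q^{n+1}M_\q\subseteq\p^{i+1}M_\q+(t_1,\dots,t_d)^{n+1-i}\p^iM_\q$. That inclusion is exactly what the paper's induction on the number of parameters proves for modules; Hironaka's original result covers only $M=R$. So you need that argument, or a citation of the module version, rather than the sketch as written.
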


Our proof of this theorem requires two main ingredients.

\begin{chunk}
    The first main ingredient comes from Hironaka's theory of normal flatness \cite[Chapter~II]{Hironaka}. We assume the following setup: 
    
 \begin{setup}\label{setupHironaka} Let  $R$ be a Noetherian ring and $I \subseteq J$ be ideals of $R$ such that $\overline{J}=J/I$ is a complete intersection ideal in $\overline{R} = R/I$. We denote $\k = R/J = \overline{R}/\overline{J}$.
 \end{setup}

  Under Setup~\ref{setupHironaka}, we have a natural map of graded algebras \[ \gr_{J}(R) \longrightarrow \gr_{\overline{J}} (\overline{R}),\] and we are interested in the case when it admits a section. The assumption that $\overline{J}$ is a complete intersection ideal implies that $\gr_{\overline{J}}(\overline R)$ is a symmetric algebra over $\k$ generated by the images of (any choice of) $f_1, \ldots, f_t$ in $J$ that generate the ideal $\overline{J}$. Thus there is a $\k$-algebra section \[ \gamma: \gr_{\overline{J}}(\overline{R}) \longrightarrow \gr_J(R),\]
    given by $\overline{f_i} +J^2 \mapsto f_i +J^2$. Note that this map depends on the choice of the lift of the generators.

    \begin{example}\label{ex-setup}
         Suppose that $(R,\m)$ is a Noetherian local ring and let $\ma$ be some ideal such that $R/\ma$ is regular. Then $\ma \subseteq \m$ satisfy the conditions of Setup~\ref{setupHironaka}. 
    \end{example}

     Under Setup~\ref{setupHironaka}, for any $R$-module $M$, we have inclusions $I^nM \subseteq J^n M$ for each $n \geq 0$. This induces the following map of graded $\k$-modules
    \[ \alpha_M: \gr_I(M)/J \gr_I (M) \longrightarrow \gr_{J}(M). \] 
    Using the action of $\gr_J(R)$ on $\gr_J(M)$, we then define the map \[ \alpha_M \otimes \gamma: \gr_{I}(M)/J\gr_{I} (M) \otimes_{\k} \gr_{\overline J}(\overline R) \longrightarrow \gr_J(M) \] where $(\alpha_M \otimes \gamma)(a \otimes b) = \alpha_M(a) \gamma(b)$. Equipping the source with the total grading of the tensor product, this is a map of graded $\k$-modules. 
    When $M=R$, the map $\alpha_R \otimes \gamma$ is a homomorphism of graded $K$-algebras. In general, $\gr_{I}(M)/J\gr_{I} (M) \otimes_{\k} \gr_{\overline J}(\overline R)$ is a graded $G:=\gr_{I}(R)/J\gr_{I} (R) \otimes_{\k} \gr_{\overline J}(\overline R)$-module, and considering $\gr_J(M)$ as a $G$-module by restriction of scalars through $\alpha_R \otimes \gamma$, the map $\alpha_M \otimes \gamma$ is a homomorphism of $G$-modules.

    The following proposition is essentially due to Hironaka \cite[Proposition II.1 p.~184]{Hironaka}, which handles the $M = R$ case. Building on his idea, we give somewhat more streamlined proof and generalize it to all finitely generated modules. 
    
    \begin{proposition}[Hironaka]\label{PropHironaka}
        In Setup~\ref{setupHironaka}, if $M$ is a finitely generated $R$-module such that $\gr_I(M)$ is free $\overline{R}$-module, then \[ \alpha_M \otimes \gamma:  \gr_{I}(M)/J \gr_{I} (M) \otimes_{\k} \gr_{\overline J}(\overline R) \longrightarrow \gr_J(M) \] is an isomorphism.  
    \end{proposition}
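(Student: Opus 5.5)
The plan is to first reduce to a concrete description of $J^nM$ and then prove surjectivity and injectivity of $\alpha_M\otimes\gamma$ degree by degree. Choose $f_1,\dots,f_t\in J$ whose images generate $\overline J$, so that $J=I+(f_1,\dots,f_t)$ and $\gamma$ sends the $i$-th variable to $f_i+J^2$. Expanding $J^n=(I+(f))^n$ gives
\[
J^nM=\sum_{|a|+m=n} f^a\, I^mM,
\]
where $a$ ranges over multi-indices. In total degree $n$, the source of $\alpha_M\otimes\gamma$ is $\bigoplus_{|a|+m=n}\bigl(I^mM/(I^{m+1}M+JI^mM)\bigr)\cdot f^a$, and the map sends the class of $y\in I^mM$ in the $a$-summand to $f^a y+J^{n+1}M$. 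The displayed formula shows at once that every element of $J^nM/J^{n+1}M$ is a sum of such images. This gives surjectivity, and it uses no hypothesis beyond Setup~\ref{setupHironaka}.

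For injectivity, the freeness hypothesis enters through the following observation. Each graded piece $F_m:=I^mM/I^{m+1}M$ is a free $\overline R$-module, say $F_m\cong\overline R^{\,b_m}$. Since $\overline f_1,\dots,\overline f_t$ is a regular sequence in $\overline R$ generating $\overline J$, the ideal $\overline J$ is quasi-regular on $F_m$. Concretely, $\overline J^{\,j}F_m/\overline J^{\,j+1}F_m$ is a free $\k$-module with basis given by the classes of $f^a e$, where $|a|=j$ and $e$ runs through a basis of $F_m$. Equivalently, an element $\sum_{|a|=j} f^a y_a$ with $y_a\in F_m$ lies in $\overline J^{\,j+1}F_m$ if and only if every $y_a\in \overline J F_m$. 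This is the only input from the complete intersection and freeness assumptions.

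I would then prove the key claim: if $y_a\in I^{m_a}M$ with $|a|+m_a=n$ and $\sum_a f^a y_a\in J^{n+1}M$, then each $y_a\in I^{m_a+1}M+JI^{m_a}M$, i.e.\ each $y_a$ is zero in $F_{m_a}/\overline J F_{m_a}$. This is exactly injectivity. The approach is induction on the smallest $m$ for which some $y_a$ with $m_a=m$ is nonzero in $F_m/\overline JF_m$. Suppose such an offending term exists. Write the element of $J^{n+1}M$ as $\sum_{|b|+k=n+1} f^b z_b$ with $z_b\in I^kM$. Then move everything into $I^mM$ and reduce modulo $I^{m+1}M$. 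All terms with $k\ge m+1$ vanish, and terms with $m_a<m$ may be assumed (by the inductive adjustment) to lie in $I^{m_a+1}M+JI^{m_a}M$ and can be re-expanded into higher $I$-degree. What survives is an identity in $F_m$ of the form $\sum_{|a|=n-m} \overline f^{\,a} \overline y_a\in \overline J^{\,n-m+1}F_m$. By the quasi-regularity above, this forces each $\overline y_a\in\overline J F_m$, which is a contradiction.

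The main obstacle will be making this descent rigorous. The difficulty is that terms $f^b z_b$ with $k<m$ are not a priori in $I^mM$, so one cannot naively reduce modulo $I^{m+1}M$. What I would try first is to prove a stronger intersection formula, by induction on $m$ and using the freeness of each $F_m$:
\[
J^{n}M\cap I^mM=\sum_{|a|+k=n,\ k\ge m} f^a I^kM+I^{n}M\quad(\text{for } m\le n),
\]
together with its analogue with $\overline J$-adic corrections. With this formula in hand, the reduction modulo $I^{m+1}M$ becomes legitimate, and the quasi-regularity statement in $F_m$ closes the argument.
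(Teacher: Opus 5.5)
\textbf{There is a genuine gap: the intersection formula is stated but not proved, and quasi-regularity alone does not prove it.}

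Your architecture is the paper's. Surjectivity is formal. Injectivity reduces to the intersection formula $J^{n+1}M\cap I^mM=\sum_{|b|+k=n+1,\,k\ge m}f^bI^kM$. Once you have it, reducing modulo $I^{m+1}M$ and using quasi-regularity of $\overline J$ on the free module $F_m$ finishes exactly as you describe. The paper's ``claim'' is precisely this formula for $t=1$.

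The formula is where all the work lies, and you only announce it. The tool you name as the sole input from the hypotheses, quasi-regularity, does not make your induction on $m$ go through once $t\ge 2$. Take $z\in J^nM\cap I^{m+1}M$ written as $\sum_{|a|+k=n,\,k\ge m}f^az_a$. Reducing modulo $I^{m+1}M$ gives $\sum_{|a|=n-m}\overline f^{\,a}\,\overline z_a=0$ in $F_m$. Quasi-regularity then yields only $\overline z_a\in\overline JF_m$, i.e.\ $z_a\in I^{m+1}M+(f_1,\dots,f_t)I^mM$. The pieces $f^af_iv$ with $v\in I^mM$ still have $I$-degree $m$: you have raised only the $f$-degree. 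The same situation then recurs at $f$-degree $n-m+1$, then $n-m+2$, and so on without end. For $t=1$ this cannot happen: $\overline f$ is a nonzerodivisor on $F_m$, so $\overline f^{\,n-m}\,\overline z_m=0$ forces $\overline z_m=0$, i.e.\ $z_m\in I^{m+1}M$ outright. That is exactly the paper's argument.

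\textbf{How to close the gap.} The paper avoids the problem by inducting on $t$. With $J'=I+(f_1)$, the case $t=1$ gives $\gr_{J'}(M)\cong \gr_I(M)/f_1\gr_I(M)\otimes_{K'}K'[x_1]$. This is again free over $R/J'$, and the images of $f_2,\dots,f_t$ form a regular sequence on $R/J'$, so one can iterate.

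If you want to keep treating all the $f_i$ at once, you need a genuinely stronger input than quasi-regularity. Either of the following works:
\begin{enumerate}
\item \emph{Linear type.} An ideal generated by a regular sequence is of linear type, so the syzygies of $(\overline f^{\,a})_{|a|=N}$ over $\overline R$ are generated by $\overline f_i\epsilon_{b+e_j}-\overline f_j\epsilon_{b+e_i}$. These lift to honest identities $f_if^{b+e_j}=f_jf^{b+e_i}$ in $R$. Hence $\sum f^az_a$ can be rewritten with every coefficient in $I^{m+1}M$.
\item \emph{Artin--Rees.} Artin--Rees gives a fixed $c$ with $(f_1,\dots,f_t)^LI^mM\cap I^{m+1}M\subseteq (f_1,\dots,f_t)^{L-c}I^{m+1}M$ for all $L\ge c$. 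After enough rounds of your regress, the leftover term therefore lands in $(f_1,\dots,f_t)^{n-m-1}I^{m+1}M$, which lies in the target sum.
\end{enumerate}
Without one of these, or the reduction to $t=1$, the proof is incomplete at its key step.
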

        \begin{proof}
            We have $J = I +(f_1, \ldots, f_t)$ where we may assume $t \geq 1$. Our proof is by induction on $t$ and we begin by assuming that the assertion holds when $t = 1$. Let $J' = I + (f_1)$ and $\k'= R/J'$. We then have \[ \gr_{I}(M)/ f_1\gr_I(M) \otimes_{\k'} \gr_{\overline{J'}}(\overline R) \cong \gr_{J'}(M).\]  Since $\gr_{I}(M)/f_1\gr_{I}(M)$ remains flat as a $\overline{R}/f_1\overline{R} = R/J'$-module, along with the isomorphism above, we deduce that $\gr_{J'}(M)$ is a flat $R/J'$-module. Now the inductive hypothesis kicks in to give us the desired result. Therefore it suffices to prove the case when $J = I + (f)$. 
        
            To that end, it is enough to show that $(\alpha_M \otimes \gamma)_n$ is a bijective map for each $n \geq 1$. Consider the following commutative diagram \[ \xymatrix{  \bigoplus\limits_{i+j=n} I^i M \otimes_R (f)^{j} \ar[d]_{\mu} \ar[dr]^{\nu} \ar[r]^-{\eta}& J^n M \ar[d]^{\pi} \\  \bigoplus\limits_{i+j=n} \displaystyle \left[\frac{\gr_I(M)}{J \gr_I(M)}\right]_i \otimes_{\k} [\gr_{\overline J}(\overline R)]_j \ar[r]_-{}  & [\gr_J(M)]_n }\]
where the maps $\mu$ and $\pi$ are induced by the natural surjections and $\eta$ is induced by the action. The maps $\mu$ and $\nu$ are surjective, so it suffices to show that $\ker(\nu) \subseteq \ker(\mu)$. Fix $h\in \ker(\nu)$. Suppose that $h = h_m + \cdots + h_n$ where $h_i \in I^iM \otimes_R (f)^{n-i}$ and $m \geq 0$. Since $\nu(h) =0$, we must have $\eta(h) \in J^{n+1}M = \sum_{i=0}^{n+1}(f)^{n+1 -i}I^iM$.

We claim that we can find an expression $\eta(h) = g_m + \cdots+g_{n+1}$ where $g_i \in (f)^{n+1 -i}I^iM$. Let \begin{equation} \label{eqn}
    \eta(h) = g_r + g_{r+1} + \cdots + g_{n+1}
\end{equation} be any expression for $\eta(h)$ with $g_i\in (f)^{n+1 -i}I^iM$ and  $g_r \neq 0$. If $r\geq m$, there is nothing to do. Otherwise, suppose that $r<m$. It suffices to show that there is an expression $\eta(h) = g'_r + \cdots + g'_{n+1}$ with $g'_i\in (f)^{n+1 -i}I^iM$ and $g'_r=0$, as we can obtain an expression of the desired form proceeding inductively.

To this end, for $i=0,\dots,n$, let $\{b_{i,1},\dots,b_{i,t_i}\}\subseteq I^i M$ be elements such that their images mod $I^{i+1}M$ form a free $\overline{R}$-module basis for $I^i M/I^{i+1}M$, and thus also an $R$-module generating set for $I^i M$. Let $h_i = \sum_k b_{i, k} \otimes r_{i, k}f^{n-i}$ and $g_i = \sum_{k} f^{n+1-i}s_{i,k}b_{i,k}$ for all $i$. Modulo $I^{r+1}M$, the equality \cref{eqn} can be written as $ \sum_{k} s_{r, k}f^{n+1-r}b_{r, k} = 0$, and therefore we must have $s_{r,k}f^{n+1-r} =0 $ over $\overline R$ for all $k$. Since $r < n+1$, $f^{n+1-r}$ is a nonzerodivisor in $\overline{R}$. This implies that $s_{r,k} \in I$ for all $k$. Rewriting $s_{r,k}b_{r,k}$ in terms of the basis of $I^{r+1}M$, we have produced a different expression of $\eta(h) = g'$ with $g'_r = 0$. This completes the justification of the claim.

 Given the claim, we thus set $r = m$ and consider the equality \cref{eqn} modulo $I^{m+1}M$. Arguing similarly as above, we deduce $r_{m, k} = s_{m,k}f$ modulo $I$, and thus $r_{m,k} \in I+(f) = J$ for all $k$. In particular, $\mu(h_m) = 0$. Proceeding inductively in a similar manner, we thus get $\mu(h)=0$.
\end{proof}

From this we deduce the following key corollary.

\begin{corollary} \label{cor-family}
    Suppose that $R$ is a Noetherian ring and $\p \in \Spec\, R$ is such that the regular locus of $\overline{R}:=R/\p$ is an open set. If $M$ is a finitely generated $R$-module, there exists $f \in R \smallsetminus \p$ such that for all $\q \in \V(\p)\smallsetminus \V(f)$, there is an isomorphism of graded modules \[ \kappa(\q) \otimes_{\overline{R}_f} \gr_{\p}(M)_f \otimes_{\kappa(\q)} Q \cong \gr_{\q R_{\q}}(M_\q),\] where $Q:= \gr_{\overline{\q}}(\overline{R}_\q)\cong \kappa(\q)[x_1,\dots,x_s]$ for $s=\mathrm{height}(\q/\p)$. Furthermore, for all such $\q$, \[\beta(\kappa(\q) \otimes_{\overline{R}_f} \gr_{\p}(M)_f ) = \beta(M_\q).\]
\end{corollary}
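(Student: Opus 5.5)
Here $\gr_{\p}(M)=\bigoplus_i \p^iM/\p^{i+1}M$ is a finitely generated graded module over $\gr_{\p}(R)$, a finitely generated standard graded $\overline{R}$-algebra; here $\overline{R}=R/\p$ is a domain. The plan is to localize at one element $f$ so that three things hold on $\Spec \overline{R}_f$, and then to apply Proposition~\ref{PropHironaka} at each prime $\q$ after localizing at $\q$.

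\textbf{Choice of $f$.} The conditions are:
\begin{enumerate}[(i)]
\item $\overline{R}_f$ is regular. This is possible because the regular locus of $\overline{R}$ is open and contains the generic point.
\item $\gr_{\p}(M)_f$ and $\gr_{\p}(R)_f$ are free $\overline{R}_f$-modules. By generic freeness (Grothendieck) applied to the finitely generated graded module $\gr_{\p}(M)$ over $\gr_\p(R)$, which is of finite type over the domain $\overline{R}$, some localization $\overline R_f$ makes every graded piece free. The same holds for $\gr_{\p}(R)$.
\item The graded Betti numbers of $\gr_{\p}(M)_f$ over a presentation of $\gr_\p(R)_f$ specialize correctly to every fiber. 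For this I take a graded free resolution of $\gr_{\p}(M)_f$ over $S=\overline{R}_f[y_1,\dots,y_n]$, where $n=\mu(\p R_\p)$ and $S\to\gr_\p(R)_f$ is a minimal presentation. After shrinking $f$, $[\gr_\p(R)]_1=\p/\p^2$ is free, so such an $S$ exists. Shrinking $f$ further, all syzygy modules and all homology of $\kappa(\q)\otimes$ the resolution are free, so the fibers of the Betti numbers are constant. Finiteness is needed here: since $S$ is a polynomial ring over the regular domain $\overline R_f$, generic freeness applied to the finitely many relevant $\Tor$ modules $\Tor^S_i(\gr_\p(M)_f,\overline R_f)$, for $i\le n$ (Hilbert syzygy over the generic fiber), suffices. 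Those $\Tor$ modules are finitely generated graded over $S$, and generic freeness applies to them.
\end{enumerate}

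\textbf{Applying Proposition~\ref{PropHironaka} at a prime $\q$.} Fix $\q\in \V(\p)\smallsetminus\V(f)$. Localize at $\q$ and take $I=\p R_\q$ and $J=\q R_\q$. Then $R_\q/I=\overline{R}_\q$ is regular local by (i), so $\overline J$ is generated by a regular sequence of length $s=\mathrm{height}(\q/\p)$. Thus Setup~\ref{setupHironaka} holds, as in Example~\ref{ex-setup}, and $\gr_{\overline{\q}}(\overline R_\q)\cong \kappa(\q)[x_1,\dots,x_s]$. By (ii), $\gr_I(M_\q)=\gr_\p(M)_\q$ is free over $\overline R_\q$. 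Proposition~\ref{PropHironaka} then gives the isomorphism
\[\gr_\p(M)_\q/\q\gr_\p(M)_\q\otimes_{\kappa(\q)} Q\cong\gr_{\q R_\q}(M_\q),\]
and the left side is $\kappa(\q)\otimes_{\overline R_f}\gr_\p(M)_f\otimes Q$. Applying the same statement with $M=R$ gives the compatible ring isomorphism $\gr(R_\q)\cong \kappa(\q)\otimes\gr_\p(R)_f\otimes Q$, so the module isomorphism is one over compatible rings.

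\textbf{Betti table equality.} Set $N=\kappa(\q)\otimes\gr_\p(M)_f$ and $A=\kappa(\q)\otimes \gr_\p(R)_f$. A minimal presentation of $A$ is $S_\q=\kappa(\q)[y_1,\dots,y_n]$. Then $S_\q[x_1,\dots,x_s]\to A\otimes Q$ is a minimal presentation of $\gr(R_\q)$, since it is an isomorphism in degree $1$. The minimal free resolution of $N\otimes Q$ over $S_\q[x]$ is the base change of that of $N$ over $S_\q$, because $Q$ is a polynomial extension and hence flat. Therefore the graded Betti numbers agree, giving $\beta(N)=\beta(M_\q)$. Condition (iii) is not needed for this equality; it only makes $\beta(N)$ independent of $\q$, which is what Theorem~\ref{mainlemma} uses afterward.

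\textbf{Main obstacle.} The delicate step is checking that the isomorphism of Proposition~\ref{PropHironaka} is graded and respects the ring structures, so that the Betti tables over the two presentations match. This means the degree-one parts must correspond, with embedding dimension $n+s$ on both sides. That correspondence is exactly the $M=R$ case combined with the freeness of $\p/\p^2$ after localization at $f$.
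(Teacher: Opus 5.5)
Your proof is correct and follows the paper's own argument. You choose $f$ so that $\overline{R}_f$ is regular and $\gr_\p(R)_f$, $\gr_\p(M)_f$ are free by generic freeness, apply Proposition~\ref{PropHironaka} at each $\q$ with $I=\p R_\q$, $J=\q R_\q$ (for both $R$ and $M$), and match Betti tables by base-changing a minimal presentation and minimal resolution along $Q$. Your condition (iii) is unnecessary for this statement: it is exactly Lemma~\ref{lem-family}, which the paper proves separately using the Koszul complex $\gr_\p(M)_f\otimes_S \Kos^S(y_1,\dots,y_n)$ and generic freeness of its boundaries and homology.
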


\begin{proof}
                Since $\text{Reg}(\overline{R})$ is a nonempty open set, there is some $g\in R\smallsetminus \p$ such that $\overline{R}_{g}$ is regular. By applying the generic freeness lemma (e.g., \cite[(22.A)~Lemma 1]{MatsumuraCA})
                to $\gr_{\p}(R)$ and $\gr_\p(M)$, there exists some $h \in R\smallsetminus \p$ such that
                $\gr_{\p}(R)_{h}$ and $\gr_\p(M)_h$ are free $\overline{R}_{h}$-modules. Set ${f=gh}$. Then for each $\q\in U:= \V(\p)\smallsetminus \V(f)$, we have that $\overline{R}_\q$ is regular, and both $\gr_{\p R_\q}(R_\q)$ and $\gr_{\p R_\q}(M_\q)$ are free $\overline{R}_{\q}$-modules. The isomorphism $Q\cong \kappa(\q)[x_1,\dots,x_s]$ follows from Example~\ref{ex-setup}.
               
		For $\q\in U$, note that 
                \[ \kappa(\q) \otimes_{\overline{R}_f} \gr_{\p}(M)_f \cong \frac{\gr_{\p R_{\q}}(M_\q)}{\q \gr_{\p R_{\q}}(M_\q)},\] where $\kappa(\q)= R_\q/\q R_\q$. Due to Proposition~\ref{PropHironaka}, for $\q\in U$ we have an isomorphism of graded $\kappa(\q)$-algebras:
                \[  \gr_{\q R_\q}(R_\q) \cong \kappa(\q) \otimes_{\overline{R}_f} \gr_{\p}(R)_f  \otimes_{\kappa(\q)} Q  \]
and a compatible isomorphism of graded modules         
                \[  \gr_{\q R_\q}(M_\q) \cong \kappa(\q) \otimes_{\overline{R}_f} \gr_{\p}(M)_f  \otimes_{\kappa(\q)} Q.  \]
                
Now, let $S\twoheadrightarrow \kappa(\q) \otimes_{\overline{R}_f} \gr_{\p}(R)_f$ be a minimal graded presentation as a graded $\kappa(\q)$-algebra, and let $P$ be a minimal free resolution of $\kappa(\q) \otimes_{\overline{R}_f} \gr_{\p}(M)_f$ as a graded $S$-module. Then $S \otimes_{\kappa(\q)} Q \twoheadrightarrow \kappa(\q) \otimes_{\overline{R}_f} \gr_{\p}(R)_f  \otimes_{\kappa(\q)} Q$ is a minimal graded presentation as a graded \mbox{$\kappa(\q)$-algebra}, and $P\otimes_{\kappa(\q)} Q$ is a minimal free resolution of $\kappa(\q) \otimes_{\overline{R}_f} \gr_{\p}(M)_f  \otimes_{\kappa(\q)} Q$ over $S \otimes_{\kappa(\q)} Q$. Thus, we have
\[ \begin{aligned}\hspace{1in} \beta( \gr_{\q R_\q}(M_\q) )_{i,j} &= \beta(\kappa(\q) \otimes_{\overline{R}_f} \gr_{\p}(M)_f  \otimes_{\kappa(\q)} Q)_{i,j} \\
&=\dim_{\kappa(\q)} \left[ \H_i \left( (P\otimes_{\kappa(\q)} Q) \otimes_{S\otimes_{\kappa(\q)} Q} \kappa(\q) \right) \right]_j \\
&=\dim_{\kappa(\q)} \left[ \H_i \left( P \otimes_{S} \kappa(\q) \right) \right]_j  \\
&= \beta(\kappa(\q) \otimes_{\overline{R}_f} \gr_{\p}(M)_f )_{i,j}. \hspace{1.85in} \qedhere \end{aligned}\]
           \end{proof}
            \end{chunk}

    \begin{chunk}
    To finish our proof of \cref{mainlemma} we use the fact that the Betti tables of the fibers of finitely generated modules over standard graded Noetherian algebras over a domain vary in a constructible manner. For related results, see \cite{BG}.
    \end{chunk}

    \begin{lemma}\label{lem-family}
        Suppose that $A$ is a Noetherian domain and $T$ is a standard graded Noetherian $A$-algebra with $[T]_1$ free as an $A$-module. Let $L$ be a finitely generated $T$-module. There exists a nonzero $f \in A$ such that for all $\p \in \Spec\, A \smallsetminus \V(f)$,  we have $\beta(\kappa(\p) \otimes_A L)$ is constant.
    \end{lemma}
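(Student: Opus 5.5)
Write $T = A[x_1,\dots,x_n]/I$ using a free basis $x_1,\dots,x_n$ of $[T]_1$, and set $S_A = A[x_1,\dots,x_n]$. For every $\p$, the ring $S_{\kappa(\p)} = \kappa(\p)[x_1,\dots,x_n]$ surjects onto $\kappa(\p)\otimes_A T$. This surjection is an isomorphism in degrees $0$ and $1$, since $[T]_1$ is free with basis the $x_i$. Hence it is a minimal graded presentation, and
\[ \beta(\kappa(\p)\otimes_A L)_{i,j} = \dim_{\kappa(\p)} [\Tor_i^{S_{\kappa(\p)}}(\kappa(\p)\otimes_A L, \kappa(\p))]_j .\]
The plan is therefore to find a single nonzero $f$ over which everything is free, so that these Tor groups are computed by base change of one complex over $A_f$.

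First choose a graded free resolution $F_\bullet$ of $L$ over $S_A$ by finitely generated graded free modules. It is finite: by Hilbert's syzygy theorem over the fraction field, after a first localization the resolution can be truncated at length $n$ with a free top syzygy, using generic freeness. Concretely, apply generic freeness (\cite[(22.A)~Lemma 1]{MatsumuraCA}) to the finitely many finitely generated graded $S_A$-modules $L$, the syzygy modules $Z_i = \ker(F_i\to F_{i-1})$ for $0\le i\le n+1$, and the homology modules $\H_i(F_\bullet\otimes_{S_A} A)$ where $A = S_A/(x_1,\dots,x_n)$. Each is finitely generated over a finitely generated graded $A$-algebra, so there is a nonzero $f\in A$ such that after inverting $f$ all of them are free $A_f$-modules.

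Over $A_f$, the short exact sequences $0\to Z_{i+1}\to F_{i+1}\to Z_i\to 0$ consist of $A_f$-free (hence flat) modules. Thus for any $\p\not\ni f$ the sequences stay exact after applying $\kappa(\p)\otimes_{A_f}-$. Consequently $\kappa(\p)\otimes F_\bullet$ is a graded free resolution of $\kappa(\p)\otimes_A L$ over $S_{\kappa(\p)}$. Then
\[ \Tor_i^{S_{\kappa(\p)}}(\kappa(\p)\otimes L,\kappa(\p)) = \H_i\big((F_\bullet\otimes_{S_A}A)\otimes_A\kappa(\p)\big). \]
The complex $C=F_\bullet\otimes_{S_A}A$ is a bounded complex of finite free graded $A_f$-modules, and $C$ is finitely generated in each degree.

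The remaining step is to show that $\dim_{\kappa(\p)}[\H_i(C\otimes\kappa(\p))]_j$ is independent of $\p$. After inverting $f$, the homology $\H_i(C)$ is free. We also want the boundaries and cycles to be free. To arrange this, include the images $B_i$ and cycles of $C$ among the modules made free above, which only shrinks the open set further. Once the cycles, boundaries and homology of the bounded free complex $C$ are all free, $C$ splits degreewise. Then $\H_i(C\otimes\kappa(\p)) = \H_i(C)\otimes\kappa(\p)$, whose graded dimensions are the ranks of $[\H_i(C)]_j$, independent of $\p$.

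Only finitely many $j$ occur, since $C$ is finite. Only finitely many $i$ occur: with $Z_{n+1}$ free and projective over the polynomial ring, we may take length $\le n+1$, or simply note that $C$ has finitely many terms. Hence one $f$ works for all entries simultaneously.

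The main obstacle is this base-change step: Tor computed over $A_f$ must commute with passage to every fiber. The device is to make all kernels, images and homology $A_f$-free, so that every short exact sequence involved is split and the base change is harmless.
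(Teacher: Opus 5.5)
Your argument is essentially correct, but it takes a different route from the paper. The paper never resolves $L$. It resolves the other argument of $\Tor$, using the Koszul complex $\cK=\Kos^S(x_1,\dots,x_e)$. This resolves $A=S/(x_1,\dots,x_e)$, and for every $\p$ it specializes to the Koszul resolution $S(\p)\otimes_S\cK$ of $\kappa(\p)$ over $S(\p)$. Hence $\beta_{i,j}(\kappa(\p)\otimes_A L)=\dim_{\kappa(\p)}[\H_i(\kappa(\p)\otimes_A(L\otimes_S\cK))]_j$ holds for \emph{all} $\p$, with no flatness of $L$ over $A$ needed. A single application of generic freeness to the boundaries and homology of the finite complex $L\otimes_S\cK$ then finishes, using the same splitting device as your last step. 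Resolving $L$ instead costs you an extra round of generic freeness, to make $L$ and its syzygies $A_f$-flat so that $F_\bullet$ specializes to resolutions on the fibres. It also forces you to control the length of $F_\bullet$. In return, your final complex $C=F_\bullet\otimes_{S_A}A$ consists of finite free $A$-modules. The paper's choice is more economical, since finite length and compatibility with base change come for free.

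The one step that fails as written is the finiteness of $F_\bullet$. Generic freeness makes a syzygy free over $A_f$, but truncating the resolution requires it to be projective over $S_{A_f}$. That does hold for the $n$-th syzygy $\Omega$ after shrinking, but it needs a separate argument. For instance, take $\Omega'$ the next syzygy. By Hilbert's syzygy theorem, $\mathrm{Ext}^1_{S_A}(\Omega,\Omega')$ vanishes after tensoring with $\mathrm{Frac}(A)$, so it is killed by some nonzero $f$, and then $\Omega_f$ is a summand of a free $S_{A_f}$-module. For the same reason, ``$Z_{n+1}$ free and projective over the polynomial ring'' is unsupported, and ``$C$ has finitely many terms'' is circular.

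You do not need finiteness at all. $\Tor_i^{S_{\kappa(\p)}}$ vanishes for $i>n$. The $A_f$-flatness you already arranged for $L,Z_0,\dots,Z_{n+1}$ makes $\kappa(\p)\otimes_A F_\bullet$ exact in homological degrees $1,\dots,n$, so it computes $\Tor_i$ for $i\le n$. It therefore suffices to make free the finitely many cycles, boundaries and homology modules of $C$ in homological degrees $\le n+1$. With that change your proof goes through.
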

        \begin{proof}
            We have $T_0 = A$ and suppose that $\mathrm{rank}_A(T_1) = e$. Consider a minimal surjection $S = A[x_1, \ldots, x_e] \twoheadrightarrow T$. For $\p \in \Spec\, A$, denote $\kappa(\p) = A_\p/\p A_\p$, and let $S(\p) = \kappa(\p) \otimes_A S$, $T(\p) = \kappa(\p) \otimes_A T$, and $L(\p)=\kappa(\p) \otimes_A L$.  Note that for any $\p \in \Spec\, A $, we have $S(\p) \twoheadrightarrow T(\p)$ is also a minimal surjection, and $L(\p)$ is thus an $S(\p)$-module via this map. We view $A$ as an $S$-module via the isomorphism $S/(x_1, \ldots,x_e)S \cong A$. Since $x_1,\ldots,x_e$ forms an $S$-regular sequence, the minimal $S$-free resolution of $A$ is given by $\cK = \Kos^S(x_1, \ldots, x_e)$, the Koszul complex on $x_1, \ldots, x_e$ over $S$. Now observe that 
            \[ L(\p) \otimes_S \cK \simeq L(\p) \lotimes_{S(\p)} S(\p) \otimes_{S} \cK \simeq L(\p) \lotimes_{S(\p)} \kappa(\p),\]
            where for the second quasi-isomorphism we note that $S(\p) \otimes_S K$ is a minimal $S(\p)$-free resolution of $\kappa(\p)$. Therefore, setting $\cK^L = L \otimes_S \cK$, for all $\p \in \Spec\, A$ we have that \[ \beta_{i, j}(\kappa(\p) \otimes_A L) = \dim_{\kappa(\p)}[\H_i(\kappa(\p) \otimes_A \cK^L)]_j.\]

            By applying the generic freeness lemma (e.g., \cite[(22.A)~Lemma 1]{MatsumuraCA}), there exists an element $0 \ne f \in A$ such that all the boundaries and nonzero homology modules of $\cK^L_f=(\cK^L)_f$ are free $A_f$-modules. Consequently, the short exact sequences 
            \[ 0 \longrightarrow B_i(\cK^L_f) \longrightarrow Z_i(\cK^L_f) \longrightarrow \H_i(\cK^L_f) \longrightarrow 0, \] \[ 0 \longrightarrow Z_i(\cK^L_f) \longrightarrow (\cK^L_i)_f \longrightarrow B_{i-1}(\cK^L_f) \longrightarrow 0\] are in fact all split exact sequences of $A_f$-modules, for all $i$. Thus, for any $\p \in \Spec\, A \smallsetminus \V(f)$ we may conclude that  \[ \H_* (\kappa(\p) \otimes_A \cK^L) \cong \H_*(\kappa(\p) \otimes_{A_f} \cK^L_f) \cong  \kappa(\p) \otimes_{A_f} \H_*(\cK^L_f).\]
                Since on the right hand side the homology modules are free, their $\kappa(\p)$-vector space dimension is unchanged as $\p$ varies.  
                This completes our proof.
        \end{proof}

\begin{proof}[Proof of \cref{mainlemma}]
    First we apply \cref{cor-family} to deduce the existence of $g \in R \smallsetminus \p$ such that for all $\q \in \V(\p) \smallsetminus \V(g)$ we have $\beta(\kappa(\q) \otimes_{(R/\p)_g} \gr_{\p}(M)_g ) = \beta(M_\q).$ Then we apply \cref{lem-family} with $A := (R/\p)_g$, $T = \gr_{\p}(R)_g$, and $L = \gr_{\p}(M)_g$ to deduce the existence of $h \in R$ whose image in $A$ is nonzero and for all $\q \in \Spec \, A \smallsetminus \V(h)$, we have $\beta(\kappa(\q) \otimes_A L) = \beta(M_\q)$ is constant. Identifying $\Spec\, A = \V(\p) \smallsetminus \V(g)$, we have $\Spec\, A \smallsetminus \V(h) = \V(\p) \smallsetminus \V(gh)$. Then setting $f = gh \in R \smallsetminus \p$, our proof is complete. 
\end{proof}

\begin{chunk}
    If $X$ denotes a Noetherian scheme with the J$_2$-property, then we can choose an affine cover, \[ X = \bigcup _{i = 1}^n\Spec\, R_i,\]where each $R_i$ is a Noetherian ring with the J$_2$-property; this means that every finite $R_i$-algebra has an open regular locus \cite[\href{https://stacks.math.columbia.edu/tag/07P7}{Definition 07P7}]{stacks-project}. 
\end{chunk}

We now give the proof to our main theorem. The argument given below is inspired by work of Bennett \cite[Chapter III, Remark 1.3]{Bennett}.

\begin{proof}[Proof of \cref{main-ertheorem}]
    It suffices to show that we can write $X$ as a finite union of locally closed subsets $W_{i,j}$ such that $\beta(M_\q)$ is constant on each~$W_{i,j}$. First, we prove the case when ${X = \Spec\, R}$ for $R$ a Noetherian ring with the J$_2$-property. Then $M = \Gamma(\cF, X)$ is a finitely generated $R$-module satisfying $\cF_x \cong M_{\p_x}$ for each $x \in X$, where $\p_x$ is the prime ideal of $R$ corresponding to $x$. 
    
    We build a sequence of closed subsets $Z_i$ and locally closed subsets $W_{i,j}$ inductively as follows. We set $Z_0=\Spec\, R$. Given~$Z_i$, if $Z_i=\varnothing$, we set $Z_{i+1}=\varnothing$. If $Z_i\neq \varnothing$, 
    let $\p_1,\dots,\p_t\in Z_i$ be the prime ideals defining the irreducible components of $Z_i$.
  Then we apply \cref{mainlemma} to each $\p_j$ to obtain an nonempty set $W_{i.j}$ that is a dense open subset of $\V(\p_j)$ on which $\beta(M_\q)$ is constant. Then $Z_{i+1} := \bigcup_j \big(\V(\p_j) \smallsetminus W_{i,j}\big)$ is a proper closed subset of $Z_i$, and in particular closed in $X$. In this way we obtain a
descending chain of closed subsets \[ Z_1 \supseteq Z_2 \supseteq \cdots \] which must stabilize since $R$ is Noetherian. Hence, $Z_i=\varnothing$ for $i\gg 0$. Thus, there are finitely many sets $W_{i,j}$, which together form a locally closed cover of $X$ by construction. Thus, $\{W_{i,j}\}$ forms the collection we seek, which completes the proof in the affine case.

       Finally, for the general case, it remains to observe that we can choose an affine cover \[ X = \bigcup_{i=1}^n \Spec\, R_i\] where each $R_i$ is a Noetherian ring with the J$_2$-property. For any point $x \in X$, choosing an index $j$ such that $x \in \Spec\, R_j$ yields a canonical isomorphism of stalks $\cF_x \cong (M_j)_x$, where $M_j$ is the module of sections of $\cF$ on $\Spec\, R_j$. We then take for each affine $R_i$ a locally closed cover as above. This completes the proof.
       \end{proof}

\begin{proof}[Proof of \cref{maintheorem}]
Since there are finitely many graded Betti tables of $\cO_{X,x}$ as we range over all points $x$ of $X$, there are finitely many values of regularity and spread achieved by the local rings of the scheme; this follows from the definition of regularity and spread given above. Thus \cref{main-ertheorem} suffices for regularity and spread. As for codepth, note that for $x \in X$, \[ \codepth \mathcal{O}_{X, x}  = \pdim_{Q(x)} \widehat{\mathcal{O}_{X,x}} \leq \pdim_{\gr Q(x)} \gr \widehat{\mathcal{O}_{X,x}},\] where $\widehat{\mathcal{O}_{X,x}}$ denotes the completion of $\mathcal{O}_{X,x}$  at the maximal ideal and $Q(x)$ denotes the regular local ring that minimally surjects onto this complete local ring. The equality follows by the fact that $\edim $ and $\depth$ are both invariant under completion and then by applying the Auslander--Buchsbaum formula. For the inequality, see for instance \cite[Example 1]{Froberg}. Finally, noting that the graded Betti table is also invariant under completion, we are done by \cref{main-ertheorem}.
\end{proof}

\subsection{An application to generation of derived categories.} \label{generationsection}

Suppose that $R$ is a Noetherian ring and let $\D^b(R)$ denote the bounded derived category of $R$. Given an object $E \in \D^b(R)$, the \textit{thick} subcategory generated by $E$, denoted $\thick(E)$, is the smallest triangulated subcategory of $\D^b(R)$ containing $E$, closed under taking summands.

We say $E \in \D^b(R)$ generates $\D^b(R)$ if $\thick(E) = \D^b(R)$. This  can be checked locally: an object $G$ is a generator of $\D^b(R)$ if and only if for all $\p \in \Spec\, R$ the residue field $\kappa(\p)$ of the local ring $R_\p$ is in $\thick(G_\p)$ \cite[Corollary 2.10]{BILMP}. With this result, we now give a simpler proof to the following result regarding the generation of $\D^b(R)$ \cite{BILMP}, which we learned from the authors of \emph{ibid.}

\begin{theorem}[\cite{BILMP}] \label{theoremDbR}
   If $R$ is an F-finite Noetherian ring of prime characteristic $p > 0$ and $F$ denotes its Frobenius endomorphism, then there exists an $e \geq 1$ such that $F^e_*R$ is a generator of $\D^b(R)$. 
\end{theorem}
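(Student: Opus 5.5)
The plan follows the strategy described just before Theorem~\ref{theoremDbR}: reduce generation to a local statement and control everything with a uniform bound. By \cite[Corollary 2.10]{BILMP}, it suffices to find a single $e$ such that for every $\p \in \Spec R$ we have $\kappa(\p) \in \thick((F^e_*R)_\p)$. Since localization commutes with Frobenius pushforward, $(F^e_*R)_\p \cong F^e_*(R_\p)$. The task is therefore to show that $\kappa(\p) \in \thick(F^e_* R_\p)$ for each local ring $(A,\m,k) = R_\p$, with $e$ independent of $\p$.

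An F-finite ring is excellent (Kunz), so in particular quasi-excellent. Corollary~\ref{maintheorem} therefore gives a global bound $s \geq \spread(R_\p)$ for all $\p$, and it is important to fix this $s$ first. Next choose $e$ with $p^e \geq$ (something like) $s$, chosen so that $\m^{[p^e]} \subseteq \m^{\,N}$ for the $N$ needed below. For example, taking $p^e \geq s$ gives $\m^{[p^e]}\subseteq \m^{s}$, and that is the kind of containment required.

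The local step works with the Koszul complex $\cK$ on a minimal generating set of $\m$, together with its dg ideal $J$. Since $s \geq \spread(A)$, the quasi-isomorphism $\cK/J^s \simeq \cK$ holds. Meanwhile $\cK/J^s$ is a finite complex whose terms are annihilated by $\m^{s}$, or at least it admits a finite filtration by dg modules over $\cK/J$. Each filtration quotient $J^i/J^{i+1}$ is a dg $\cK/J = k$-module, hence a complex of $k$-vector spaces, hence a direct sum of shifts of $k$. This shows $\cK \in \thick(k)$, which is standard. The direction actually needed is the reverse: $k \in \thick(F^e_*A)$. For this, use that $k$ is built from $\cK$: there is the classical fact that $k \in \thick_{\D(A)}(\cK)$ when $\cK$ has bounded length of building. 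Rather than rely on that, I would instead show $\cK$ (or $\cK/J^s$) lies in $\thick(F^e_*A)$ and then deduce $k$ via the converse, namely that $\thick(\cK)$ contains $k$ because $k$ is a retract of $\cK \otimes$ (some exterior algebra argument); this is precisely where I expect trouble.

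The key Frobenius input is as follows. $F^e_*A \otimes_A \cK \cong F^e_*(\Kos(x^{p^e}))$, and $\Kos(\underline{x}^{p^e})$ over $A$ is a dg module over $\cK$ on which the generators of $\m$ act through $p^e$-th powers. Since $p^e \geq s$, the action of $J$ factors in such a way that $F^e_*A\otimes\cK$ becomes a dg $\cK/J^s$-module. Using $\cK/J^s\simeq \cK$ together with the augmentation $\cK\to k$, the complex $k$ then appears as a summand of $F^e_*A\otimes_A \cK$ up to the building. Since $\cK$ is a finite free complex, $F^e_*A\otimes_A\cK \in \thick(F^e_*A)$, and hence $k\in\thick(F^e_*A)$. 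The main obstacle is making the summand/retract statement precise: one must produce a dg map $\cK \to \cK/J^s \to$ an object of $\thick(F^e_*A)$ splitting $k$. I would first try to mimic the proof in \cite{BILMP} that uses codepth, replacing their codepth-dependent exponent with the uniform spread bound $s$ from Corollary~\ref{maintheorem}.
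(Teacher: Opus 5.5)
Your reduction to the local statement is right, and so is your choice of $e$: taking $p^e \ge s \ge \spread(R_\p)$ for all $\p$ gives $\m^{[p^e]} \subseteq \m^s$ and $\cK \simeq \cK/J^s$. But the local step, showing $k \in \thick(F^e_*A)$, is never carried out, and both mechanisms you propose for it fail.

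First, the ``classical fact'' that $k \in \thick(\cK)$ is false whenever $A$ is singular. The same goes for $k$ being a retract of something built from $\cK$. Since $\cK$ is perfect, $\thick(\cK)$ consists of perfect complexes, while $k$ is perfect only if $A$ is regular. Second, $F^e_*A \otimes_A \cK \cong F^e_*\Kos(\underline{x}^{p^e})$ is not a dg $\cK/J^s$-module. For instance, $\m^s \subseteq J^s$ acts on its degree-zero term $F^e_*A$ by multiplication by $p^e$-th powers, which is nonzero as soon as $\dim A>0$. So no splitting of $k$ comes out of that object in the way you describe.

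The missing idea is to apply $F^e_*$ to the quasi-isomorphism $\cK \simeq \cK/J^s$ itself, rather than tensoring $\cK$ with $F^e_*A$.
\begin{itemize}
\item Since $J_0=\m$, one has $\m^s\cK \subseteq J^s$, so $\cK/J^s$ is a complex of $A/\m^s$-modules.
\item As $\m^{[p^e]}\subseteq \m^s$, the complex $F^e_*(\cK/J^s)$ is annihilated by $\m$. Equivalently, as in the paper's proof, the structure map $A \xrightarrow{F^e} A \to \cK \to \cK/J^s$ factors through $k$.
\item A complex of $k$-vector spaces is quasi-isomorphic to its homology. So in $\D(A)$ one has $F^e_*(\cK/J^s)\simeq \bigoplus_i \Sigma^i \H_i$, with each $\H_i$ a finite-dimensional $k$-vector space by F-finiteness, and $\H_0=F^e_*k\neq 0$.
\item Hence $k$ is a direct summand of $F^e_*(\cK/J^s)\simeq F^e_*\cK$.
\item Finally, $F^e_*\cK\in\thick(F^e_*A)$ because $\cK$ is a bounded complex of finite free $A$-modules.
\end{itemize}
This gives $k\in\thick(F^e_*(\cK/J^s))=\thick(F^e_*\cK)\subseteq\thick(F^e_*A)$ directly. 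You already had both ingredients, $\m^s(\cK/J^s)=0$ and $\m^{[p^e]}\subseteq\m^s$, but did not combine them. There is no need to construct a splitting map out of $\cK$ or to fall back on the codepth argument of \cite{BILMP}.
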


\begin{proof}
Since $R$ is F-finite, $X = \Spec\, R$ is an excellent scheme \cite{Kunz}. Therefore, \cref{maintheorem} implies that $\spread(X):= \sup \{\spread(R_{\p})\, | \, \p \in X \}$ is finite. Fix some positive integer ${e > \log_p(\spread(X))}$. From our discussion above, it suffices to show that for any $\p \in X$, we have $\kappa(\p) = R_\p/\p R_\p$ is in $\thick(F^e_*R_\p).$

To this end, let $(A, \m, k)$ denote the local ring $R_\p$. Let $\cK$ be the Koszul complex on a set of minimal generators of $\m$ and consider the dg ideal $J$ that fits in the short exact sequence $0 \to J \to \cK \to k \to 0$; see \cref{spread}. Since $s \geq \spread(X)$, we have that \[ \cK \simeq \cK/J^s.\] Let $F$ denote the Frobenius endomorphism on $A$. Since $\m^s (\cK/J^s) = 0$, the map 
\[
A \stackrel{F^e}{\longrightarrow} A \longrightarrow \cK \longrightarrow \cK/J^s\]
 factors through $k$. Thus,

   \[k \in \thick(F^e_*(\cK/J^s)) = \thick(F^e_*(\cK)) \subseteq \thick(F^e_*(A)). \qedhere\]

\end{proof}

\subsection*{Acknowledgements} We thank Shiji Lyu for suggesting the constructibility statement in Theorem~\ref{main-ertheorem} and a useful discussion on Lemma~\ref{lem-family}. We thank Josh Pollitz for suggesting us to consider the graded Betti tables of finitely generated modules.  We thank Giulio Caviglia, Aldo Conca, Elo\'isa Grifo, Srikanth Iyengar, Thomas Polstra, and Maria Evelina Rossi for additional helpful discussions.

The first author was partially supported by the MIUR Excellence Department Project CUP D33C23001110001, PRIN 2022 Project 2022K48YYP, and by INdAM-GNSAGA. The second author was partially supported by NSF CAREER Award DMS-2044833 and SECIHTI grant CF-2023-G-33. The fourth author was partially supported by SECIHTI Grants CBF 2023-2024-224, CF-2023-G-33 and CBF-2025-I-673.

\newcommand{\etalchar}[1]{$^{#1}$}


\begin{thebibliography}{BIL{\etalchar{+}}26}
















\bibitem[AIM06]{AIM}
Luchezar~L. Avramov, Srikanth Iyengar, and Claudia Miller.
\newblock Homology over local homomorphisms.
\newblock {\em Amer. J. Math.}, \textbf{128}(1), 23--90, 2006.
















\bibitem[Ben70]{Bennett}
Bruce~Michael Bennett.
\newblock On the characteristic functions of a local ring.
\newblock {\em Ann. of Math. (2)}, \textbf{91}, 25--87, 1970.
















\bibitem[BG85]{BG}
Maksymillian~Boraty\'{n}ski and Silvio~Greco.
\newblock Hilbert functions and {B}etti numbers in a flat family.
\newblock {\em Ann. Mat. Pura Appl. (4)}, \textbf{142}, 277--292, 1985.
































\bibitem[BILMP26]{BILMP}
Matthew~R. Ballard, Srikanth~B. Iyengar, Pat Lank, Alapan Mukhopadhyay, and
  Josh Pollitz.
\newblock High {F}robenius pushforwards generate the bounded derived category.
\newblock {\em Forum Math. Sigma}, \textbf{14}, Paper No. e12, 2026.
































\bibitem[Fr{\"o}87]{Froberg}
Ralf Fr{\"o}berg.
\newblock Connections between a local ring and its associated graded ring.
\newblock {\em J. Algebra}, \textbf{111}(2), 300--305, 1987.
































\bibitem[Hir64]{Hironaka}
Heisuke Hironaka.
\newblock Resolution of singularities of an algebraic variety over a field of
  characteristic zero.~{I}.
\newblock {\em Ann. of Math. (2)}, \textbf{79}, 109--203, 1964.
















\bibitem[Hoc73]{Hochster}
Melvin~Hochster.
\newblock Non-openness of loci in {N}oetherian rings.
\newblock {\em Duke Math. J.}, \textbf{40}, 215--219, 1973.




















\bibitem[Kun76]{Kunz}
Ernst~Kunz.
\newblock On Noetherian rings of characteristic $p$.
\newblock {\em Amer. J. Math}, \textbf{98}, 999--1013, 1976.
































\bibitem[Mat80]{MatsumuraCA}
Hideyuki Matsumura.
\newblock {\em Commutative algebra}, volume~56 of {\em Mathematics Lecture Note
  Series}.
\newblock Benjamin/Cummings Publishing Co., Inc., Reading, MA, second edition,
  1980.
















































\bibitem[{Sta}18]{stacks-project}
The {Stacks Project Authors}.
\newblock \textit{Stacks Project}.
\newblock \url{https://stacks.math.columbia.edu}, 2018.
















\end{thebibliography}
\end{document}